\newcommand{\argmin}{\mathop{\rm arg~min}\limits}
\begin{document}

\title{Polynomial algorithm for $k$-partition minimization of monotone submodular function
\thanks{
Relevant with this manuscript, 
2-partition minimization of entropy function and its potential extension to $k$-partition one 
are discussed in our paper under review (https://arxiv.org/abs/1708.01444).
This work was supported by the JSPS KAKENHI Grant-in-Aid for Scientific Research on Innovative Areas JP 16H01609 and  for Scientific Research B (Generative Research Fields) JP 15KT0013. 
}
}


\author{Shohei Hidaka 
}


\institute{S. Hidaka \at
              1-1 Asahidai, Nomi, Ishikawa 923-1292, Japan\\
              Tel.: +81-761-51-1725\\
              \email{shhidaka@jaist.ac.jp}           
}

\date{Received: date / Accepted: date}

\maketitle

\begin{abstract}
For a fixed $k$, this study considers $k$-partition minimization of submodular system $(V, f)$ with a finite set $V$ and symmetric submodular function 
$f: 2^{V} \mapsto \mathbb{R}$. 
Our algorithm uses the Queyranne's (1998) algorithm for 2-partition minimization which arises at each step of the recursive decomposition of subsets of the original $k$-partition minimization. We show that the computational complexity of this minimizer is $O(n^{3(k-1)})$.
%
\keywords{Submodular partition problem \and symmetric submodular function}
\end{abstract}

\section{$k$-partition minimization of submodular system}
Let $(V, f)$ is any submodular system with a finite set $V$ and submodular function  
$f: 2^{V} \mapsto \mathbb{R}$. 
We call function $f: 2^{V} \mapsto \mathbb{R}$ is submodular, if for any $X, Y \subseteq V$ it satisfies
\[
 f( X ) + f( Y ) \ge f( X \cup Y ) + f( X \cap Y ).
\]
We call function $f$ {\it symmetric}, if  $f(U) = f( V \setminus U )$ for any set $U \subseteq V$,
and call it {\it monotone}, if $f( U \cup U' ) \ge f( U ) $ for any set $U, U' \subseteq V$.

In this paper, we consider $k$-partition minimization with monotone submodular function $f$. 
Denote the set of all $k$-partitions for a given set $V$ by 
$$P_{k,V} := \left\{ (U_{0}, U_{1}, \dots, U_{k-1}) | \bigcup_{i}U_{i} = V, U_{i} \cap U_{j} = \emptyset \text{ for any } i \neq j\text{ and, } U_{i} \neq \emptyset \text{ for every } i \right\}.$$ 
$k$-partition minimization problem of a submodular system $(V, f)$ is to find $k$-partition
$U = (U_{0}, U_{1}, \dots, U_{k-1}) \in P_{k, V}$ which minimizes the function 
$$g( U ) := \sum_{i=0}^{k-1} f( U_{i} ) + C,$$
where $C$ is a constant to any choice of $k$-partition.
As a practical application of $k$-partition minimization problem with monotone submodular function, 
Hidaka and Oizumi \cite{HidakaUnderReview} have 
discussed the minimum $k$-partition of the mutual information for subsets with higher integrated information.  
In their study, 
the monotone submodular function is defined by the Shannon entropy of a set of random variables $X$, denoted by $f(X) := H(X)$.
The function $H( X )$ is monotone increasing, as $H( X \cup Y ) - H( X ) = H( Y \mid X )\ge 0$ for any set of variables $Y$.
The $k$-partition function is defined by
$$g( U ) = \sum_{i=0}^{k-1} H( U_{i} ) - H( U ), $$
which is known as total correlation \cite{Watanabe1960} or multi-information \cite{Studeny1999}.


Quyranne \cite{Queyranne1998} has shown a $O( |V|^{3} )$ algorithm for the $2$-partition minization problem. 
Okumoto and colleagues \cite{Okumoto2012} have shown a polynomial algorithm for the $3$-partition minimization problem.
To our knowledge, no study has reported yet a polynomial algorithm for the $k$-partition minimization problem with a fixed $k > 1$ in general.
In this paper, we report a polynomial-time algorithm for $k$-partition minimization of an arbitrary monotone submodular function 
by extending Queyranne's algorithm \cite{Queyranne1998}.

\section{Extension of Queyranne's algorithm}
Queyranne's algorithm \cite{Queyranne1998} works on bi-partition minimization of $$g( U ) = f( U ) + f( V \setminus U )$$ 
for an arbitrary submodular system $(V, f)$ with respect to non-empty set $U \subset V$.

Here we show a recursive algorithm extending it for $k$-partition minimization of $g( U ) = \sum_{i=0}^{k-1}f(U_{i}) $ with respect to 
$U \in P_{k, V}$. 
The basic idea is to reduce the original $k$-partition problem with the objective function $g: P_{k, V} \mapsto \mathbb{R}$ to 
a set function $g_{k, V}: 2^{V} \mapsto \mathbb{R}$ by recursively defining $g_{k-1, U}$ for the remaining $(k-1)$ subsets in a given $k$-partition.

By taking a 3-partition minization as an example, first let us consider the following ``naive'' reduction:
$$
 \min_{U \in P_{3,V}}g( U ) = \min_{\emptyset \subset U_{1} \subset V} 
\left[
f( U_{1} ) + \overbrace{ \min_{ \emptyset \subset U_{2} \subset V \setminus U_{1} } \{ f( U_{2} ) + f( V \setminus ( U_{1} \cup U_{2}) ) \} }^{ g_{2}( V \setminus U_{1} ) }
\right].
$$
The first-level minization is performed on the function $f( U_{1} ) + g_{2}( V \setminus U_{1} )$, where 
$g_{2}( V \setminus M_{1} )$ is defined by the second-level minimization of bi-partition function $f(U_{2}) + f( V \setminus ( U_{1} \cup U_{2}))$. 
In this naive reduction, the second-level minimization is solved by the Queyranne's algorithm, but the first-level function $f( M_{1} ) + g_{2}( V \setminus M_{1} )$
is not symmetric in general. 
In order to let the function at every level be symmetric, let us redefine the reduction as follows: 
$$
 \min_{U \in P_{3,V}}g( U ) = \min_{\emptyset \subset U_{1} \subset V} 
\min
\begin{cases}
f( V \setminus U_{1} )
+
\overbrace{ \min_{ \emptyset \subset U_{2} \subset U_{1} } f_{U_{1}}( U_{2} ) }^{ g_{2}( U_{1} ) }
\\
f( U_{1} ) 
+
\overbrace{ \min_{ \emptyset \subset U_{2} \subset V \setminus U_{1} } 
f_{ V \setminus U_{1} }( U_{2} )
}^{ g_{2}( V \setminus U_{1} ) }
\end{cases},
$$
where $f_{ W }( U ) := f( U ) + f( W \setminus U )$ for any set $U \subseteq W$.
In this formulation, each of the second-level minimization for $g_{2}( U_{1} )$ and $g_{2}( V \setminus U_{1} )$ is solveable with Queyranne's algorithm,
and the first-level function 
$$g_{3, V}(U_{1}) := \min( f( V \setminus U_{1} ) + g_{2}( U_{1} ), f( U_{1} ) + g_{2}( V \setminus U_{1} ) )$$ 
is symmetric.
Thus, the 3-partition minimization of $g(U)$ is solveable by Queyranne's algorithm, if this first-level function $g_{3, V}(U_{1})$ is submodular.
Later, we prove its submodularity.

Before considering with the submodularity of the above function, let us extend the reduction with nested symmetric functions for the $k$-partition minimization as follows. 
We can identify the $k$-partition minimization problem of $g(U)$ to  
\begin{equation}
\label{eq-k-partition}
 \min_{(U_{1}, \ldots, U_{k})\in P_{k, V}} \sum_{i=1}^{k}f(U_{i}) = \min_{ \emptyset \subset U \subset V}g_{k, V}( U ), 
\end{equation}
where
the series of symmetric function is defined for any non-empty subset $U_{1} \subset U_{2} \subseteq V$ by 
$g_{2, U_{2}}( U_{1} ) := f_{U_{2}}(U_{1}) = f(U_{1}) + f( U_{2} \setminus U_{1} )$
and for $k > 2$
\begin{equation}
\label{eq-recursive-def}
g_{k,U_{2}}( U_{1} ) := 
  \min( h_{k-1, U_{2}}( U_{1}), h_{k-1, U_{2}}( U_{2} \setminus U_{1} ) ), 
\end{equation}
where 
\begin{equation}
 h_{k, U_{2}}( U_{1}) = 
\begin{cases}
 f( U_{1} ) +  \min_{ \emptyset \subset U' \subset U_{2} \setminus U_{1} }g_{k, U_{2} \setminus U_{1}}( U' ) & \text{if } |U_{2} \setminus U_{1}| > 1
\\
 \infty & {\text{otherwise}}
\end{cases}
\end{equation}
for any $ k > 2$ and $U_{1} \subset U_{2} \subseteq V$.
For $k=2$, $g_{2, V}( U ) = f_{ V }( U )$, and 
minimization of $f_{V}(U)$ over the set of bi-partitions of $V$ can be computed by Queyranne's algorithm.

\section{Main results}

If the $k^{\text{th}}$ order function $g_{k, V}$ is submodular at every step above, we can apply Queyranne's algorithm to this function at every recursive step.
As $g_{k, V}$ is symmetric by definition, our main question is whether it is submodular. 
The main result, Theorem \ref{thm-k-partition}, states the function $g_{k, V}$ is submodular, if $f$ is monotone submodular.
To prove Theorem \ref{thm-k-partition}, we have the following steps.
\begin{enumerate}
 \item {Lemma \ref{lem-partition-submodularity} shows submodularity of the function $h_{2, V}$.}
 \item {Lemma \ref{lem-minimum} shows the minimum of two submodular functions $\min( f(X), g(X) )$ 
       with monotone difference is submodular.}
 \item {Theorem \ref{thm-k-partition} shows the symmetrized minimum of $k$-partition $g_{k,V}$ is submodular.}
\end{enumerate}





\begin{lemma}[submodularity of minimum bi-partition] \label{lem-partition-submodularity}
For an arbitrary submodular system $(V, f)$ such that the function $f$ is monotone and $f(\emptyset) = 0$. 
The minimum of bi-partition function $$g(X) := 
\begin{cases}
f( V \setminus X ) + \min_{\emptyset \subset A \subset X }f( A ) + f( X \setminus A ) & \text{if }|X| \ge 2
\\
f( V \setminus X ) + f( X ) & \text{otherwise}
\end{cases}
$$ is submodular.
\end{lemma}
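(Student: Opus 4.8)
The plan is to verify the submodular inequality $g(X) + g(Y) \ge g(X\cup Y) + g(X\cap Y)$ directly, by exhibiting, for each pair $X, Y$, feasible bipartitions of $X\cup Y$ and of $X\cap Y$ whose total value is dominated by the optimal bipartitions of $X$ and $Y$. Throughout I would write $g(X) = f(V\setminus X) + p(X)$, where $p(X) := \min_{\emptyset\subset A\subset X}\{f(A) + f(X\setminus A)\}$ for $|X|\ge 2$ and $p(X) := f(X)$ for $|X|\le 1$. The singleton convention is consistent with the displayed definition because $f(\emptyset)=0$ and, by submodularity, $f(A)+f(X\setminus A)\ge f(A\cup(X\setminus A))+f(A\cap(X\setminus A))=f(X)$ for every split, so a singleton carries exactly its own $f$-value.

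First I would dispose of the complement term. Since $S\mapsto V\setminus S$ is a lattice anti-isomorphism, submodularity of $f$ alone yields $f(V\setminus X) + f(V\setminus Y) \ge f(V\setminus(X\cup Y)) + f(V\setminus(X\cap Y))$, with no further hypothesis. Hence it suffices to control the four bipartition terms $p(X), p(Y), p(X\cup Y), p(X\cap Y)$, while keeping the slack in this complement inequality available to absorb any deficit the bipartition terms create.

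The core step is the uncrossing. I would fix optimal bipartitions $X = A\sqcup B$ and $Y = C\sqcup D$, and from them build a bipartition of $X\cap Y$ by aligning along one side --- for instance $R := A\cap Y$ and $S := B\cap Y$, which partition $X\cap Y$ --- together with a bipartition of $X\cup Y$ by the complementary grouping. On the parts that match up I would apply submodularity of $f$; on the leftover ``crossing'' pieces I would invoke monotonicity to merge them into a neighbouring block at no greater cost, using $f(\emptyset)=0$ to close the boundary. Choosing which side to align along (and, if needed, taking the better of the two alignments) is what should make the matched/unmatched bookkeeping balance.

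The main obstacle I anticipate is that $p$ by itself is \emph{not} submodular: when $X$ and $Y$ meet in a single element, $X\cap Y$ is a singleton and one of $X, Y$ may be forced into an inefficient split, so a naive uncrossing produces monotonicity bounds pointing the wrong way for a bare submodular estimate. This is precisely the configuration where monotonicity must do the real work, letting the submodular slack in the complement term dominate the deficit in the bipartition term. I therefore expect the proof to turn on a careful case analysis on $|X\cap Y|$ (and symmetrically $|X\cup Y|$), with the singleton and empty-intersection cases --- salvaged through monotonicity and $f(\emptyset)=0$ --- forming the delicate heart of the argument, rather than the generic large-overlap case.
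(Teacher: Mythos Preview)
Your plan follows the paper's proof closely. Both write $g(X)=f(V\setminus X)+p(X)$, handle the complement term by a single application of submodularity (the paper's inequality~(\ref{eq-3})), and then attack the bipartition term by uncrossing: from optimal splits $X=A_1\sqcup A_2$ and $Y=B_1\sqcup B_2$, exhibit candidate splits of $X\cup Y$ and $X\cap Y$ whose total $f$-cost is bounded via submodularity and monotonicity. The paper also performs a case split on whether $X\cap Y$ is empty, just as you anticipated.

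Two points of divergence are worth flagging. First, the paper's uncrossing \emph{mixes} the two optimal splits: it picks indices $i,j$ with $A_i\cap B_j\neq\emptyset$, takes $\bigl(A_i\cap B_j,\ (A_{3-i}\cup B_{3-j})\cap X\cap Y\bigr)$ as the candidate bipartition of $X\cap Y$ and $A_i\cup B_j$ on the $X\cup Y$ side, and then applies the submodular inequality to the two pairs $(A_i,B_j)$ and $(A_{3-i},B_{3-j})$ so that all four terms $f(A_1)+f(A_2)+f(B_1)+f(B_2)$ are consumed; monotonicity enters once, to pass from $f(A_{3-i}\cup B_{3-j})$ to its restriction to $X\cap Y$. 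Your illustrative choice $R=A\cap Y$, $S=B\cap Y$ uses only the split of $X$ and leaves $f(C)+f(D)$ uncontrolled, so as stated it would not close the inequality; the cross-pairing of a block of $X$ with a block of $Y$ is the device that makes everything line up. Second, you expect $p$ by itself to fail submodularity and plan to absorb the deficit using slack from the complement term. The paper does not do this: its inequality~(\ref{eq-12}) is exactly $p(X)+p(Y)\ge p(X\cup Y)+p(X\cap Y)$, established independently and then simply added to~(\ref{eq-3}). Under the stated hypotheses the deficit-absorption mechanism is unnecessary, and the empty-intersection case is handled inside the $p$-inequality alone via $f(\emptyset)=0$.
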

\begin{proof}
If $|X| < 2$, $g$ is obviously submodular, and thus suppose $|X| \ge 2$.
For $\emptyset \subset W \subset Z \subseteq V$, write $f_{Z}(W):= f(W) + f(Z \setminus W)$.
Denote one of the minimal sets for the following functions by 
\begin{eqnarray}
\nonumber
A_{1} := \argmin_{ \emptyset \subset A' \subset X }f_{ X }( A' ), & B_{1} := \argmin_{ \emptyset \subset B' \subset Y} f_{ Y }( B' ), 
\\
\nonumber
W_{1} := \argmin_{ \emptyset \subset W' \subset X \cup Y}f_{ X \cup Y }( W' ), & Z_{1} := \argmin_{ \emptyset \subset Z' \subset X \cap Y} f_{ X \cap Y }( Z' ),
\end{eqnarray} 
and their another subset of bi-partition by 
$$A_{2} = X \setminus A_{1}, \ B_{2} = V \setminus B_{1}, \ W_{2} = ( X \cup Y ) \setminus W_{1}, \text{ and } Z_{2} = ( X \cap Y ) \setminus Z_{1},$$
and their complements by 
$$A_{3} = V \setminus X, \ B_{3} = V \setminus Y, \ W_{3} = V \setminus ( X \cup Y ), \text{ and } Z_{3} = V \setminus ( X \cap Y ).$$

If $X \cap Y = \emptyset$, $f( Z_{1}) = f( Z_{2} ) = f(A_{i} \cap B_{j}) = 0$ for any $i, j = 1, 2$, and by the minimality of $f(Z_{1}) + f(Z_{2})$ and $f(W_{1}) + f(W_{2})$, we have
$$ f( A_{1} \cup B_{1} ) + f( A_{1} \cap B_{1} ) + f( A_{2} \cap B_{2} ) + f( A_{2} \cup B_{2} ) \ge f( W_{1}) + f(W_{2}) + f( Z_{1} ) + f(Z_{2}).$$
By the submodular inequality, 
\begin{equation}
\label{eq-12}
f( A_{1} ) + f( B_{1} ) + f( A_{2} ) + f( B_{2} ) \ge f( W_{1}) + f(W_{2}) + f( Z_{1} ) + f(Z_{2})
\end{equation}
and 
\begin{equation}
\label{eq-3}
f( A_{3} ) + f( B_{3} ) \ge f( W_{3}) + f(Z_{3}). 
\end{equation}
 Adding these inequalities, $g(X)$ holds the submodular inequality.

Concider the second case that holds $X \cap Y \neq \emptyset$, $A_{i} \cap B_{j} \neq \emptyset$. 
Then, by the minimality of $f(Z_{1})+f(Z_{2})$ and $f(W_{1}) + f(W_{2})$, 
we have 
$$f( A_{i} \cap B_{j} ) + f( ( A_{3 - i} \cup B_{3 - j} ) \cap X \cap Y ) \ge f( Z_{1} ) + f( Z_{2} ),$$
and
$$f( A_{i} \cup B_{j} ) + f( ( A_{3 - i} \cap B_{3 - j} ) ) \ge f( W_{1} ) + f( W_{2} ).$$
By monotonicity of $f$, $f( ( A_{3 - i} \cup B_{3 - j} ) ) \ge f( ( A_{3 - i} \cup B_{3 - j} ) \cap X \cap Y )$, and by the submodularity inequality
we have (\ref{eq-12}).
Adding (\ref{eq-3}) to (\ref{eq-12}), $g(X)$ holds the submodular inequality.

\end{proof}

Lemma \ref{lem-partition-submodularity} states the minimum bi-partition function is submodular, if $f$ is monotone submodular function.
But note that this function is not symmetric as it is, and slightly different from the function (\ref{eq-recursive-def}), that we defined earlier so it can be minimized by Queyranne's algorithm.
As the function (\ref{eq-recursive-def}) takes additional minimum to be symmetric, 
we need to deal with this minimum of two submodular functions by showing the following Lemma \ref{lem-minimum}.

\begin{lemma}[submodularity of minimum of two submodular functions]
\label{lem-minimum}
 For two submodular function $f$ and $g$ over the ground set $V$, 
 \[
  h( X ) = \min( f( X ), g(X) )
 \]
is submodular, if the function $d( X ) := f( X ) - g( X )$ is either monotone increasing or decreasing.
\end{lemma}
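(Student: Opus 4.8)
The plan is to use the monotonicity of $d$ to pin down, for each pair $X, Y$, which of the two functions attains the minimum at the four relevant sets $X \cap Y \subseteq X, Y \subseteq X \cup Y$. First I would reduce to a single sign of the monotonicity: if $d = f - g$ is monotone decreasing, then $g - f$ is monotone increasing, and since $h = \min(f,g) = \min(g,f)$ is symmetric in the pair $(f,g)$, applying the increasing case to the relabelled pair $(g,f)$ settles the decreasing case. So I would assume henceforth that $d$ is monotone increasing.

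Fix $X, Y \subseteq V$. Since $X \cap Y \subseteq X, Y \subseteq X \cup Y$, monotonicity gives $d(X \cap Y) \le d(X), d(Y) \le d(X \cup Y)$; in particular the region $\{S : d(S) \le 0\}$, on which $h = f$, is downward closed, so the choice of minimizer cannot cross arbitrarily. This immediately disposes of the two extreme cases: if $d(X \cup Y) \le 0$ then $h = f$ at all four sets and submodularity of $f$ is exactly what we need; symmetrically, if $d(X \cap Y) > 0$ then $h = g$ at all four sets and submodularity of $g$ finishes.

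The remaining case is $d(X \cap Y) \le 0 < d(X \cup Y)$, forcing $h(X \cap Y) = f(X \cap Y)$ and $h(X \cup Y) = g(X \cup Y)$, and here I would split on the signs of $d(X)$ and $d(Y)$. In every subcase the target inequality $h(X) + h(Y) \ge h(X \cup Y) + h(X \cap Y)$ is rewritten, by substituting $f = g + d$ wherever an $f$-value occurs (or $g = f - d$ wherever a $g$-value occurs), as the submodular inequality for $g$ (resp.\ $f$) plus a leftover term that is a difference of $d$-values at comparable sets. For example, when $h(X) = f(X)$ and $h(Y) = g(Y)$ the inequality becomes $[g(X) + g(Y) - g(X \cup Y) - g(X \cap Y)] + [d(X) - d(X \cap Y)] \ge 0$, where the first bracket is nonnegative by submodularity of $g$ and the second is nonnegative because $d$ is increasing and $X \cap Y \subseteq X$. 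The two subcases where $X, Y$ select the same function are handled identically, with leftover term $d(X \cup Y) > 0$ (both select $f$) or $-d(X \cap Y) \ge 0$ (both select $g$).

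The main obstacle is precisely the genuinely mixed subcase in which $X$ and $Y$ select different functions: this is exactly where $\min$ of two submodular functions fails to be submodular in general, and where the monotone-difference hypothesis is indispensable, entering through the sign of $d(X) - d(X \cap Y)$. I expect the enumeration of selection patterns to be routine once the substitution trick is in place; the one point to verify carefully is that the direction of monotonicity is tracked consistently with the reduction in the first paragraph, so that the leftover $d$-difference always has the correct sign.
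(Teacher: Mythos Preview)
Your proposal is correct and follows essentially the same line as the paper: a case split on which of $f,g$ realizes the minimum at $X$ and $Y$, handling the ``same'' case directly by submodularity of that function and the ``mixed'' case by invoking the monotonicity of $d$ together with submodularity of one of $f,g$. The only cosmetic difference is organizational---the paper treats both monotonicity directions simultaneously via a disjunction and compares $d$-values against $d(X\cup Y)$, whereas you first reduce to $d$ increasing by symmetry and compare against $d(X\cap Y)$; both routes yield the same inequalities.
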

\begin{proof}
If $h( X ) + h( Y ) = f( X ) + f( Y )$ or $h( X ) + h( Y ) = g( X ) + g( Y )$, by submodularity we have
\[
 h( X ) + h( Y ) \ge \min( f( X \cup Y ), g( X \cup Y ) ) + \min( f( X \cap Y ), g( X \cap Y ) ) = h( X \cup Y ) + h( X \cap Y ).
\]
Otherwise, $h( X ) + h( Y ) = f( X ) + g( Y )$ or $h( X ) + h( Y ) = g( X ) + f( Y )$.
As $d( X )$ is monotone, it holds either 
$$
f( X ) \ge f( X \cup Y ) - g( X \cup Y ) + g( X )  \text{ or } g( Y ) \ge f( Y ) - f( X \cup Y ) + g( X \cup Y ).
$$
By submodularity of $f$ and $g$ 
$$
f( X ) + g( Y ) \ge f( X \cup Y ) + g( X \cap Y )  \text{ or } f( X ) + g( Y ) \ge  g( X \cup Y ) + f( X \cap Y ).
$$
Similarly, 
$$
g( X ) + f( Y ) \ge g( X \cup Y ) + f( X \cap Y )  \text{ or } g( X ) + f( Y ) \ge  f( X \cup Y ) + g( X \cap Y ).
$$
Thus, 
$$ 
h( X ) + h( Y ) \ge h( X \cup Y ) + h( X \cap Y ).
$$
\end{proof}

Combining Lemma \ref{lem-partition-submodularity} and Lemma \ref{lem-minimum}, 
the following theorem states submodularity of the symmetrized minimum $k$-partition.

\begin{theorem}[submodularity of symmetric minimum $k$-partition]
\label{thm-k-partition}
For $k > 1$ and an arbitrary submodular system $(V, f)$ with monotone submodular function $f$, 
for $X \subseteq V$
define
$g_{2, X}( Y ) := f( Y ) + f( X \setminus Y )$, and for $k > 2$
$$
g_{k, X}( Y ) = \min\left( h_{k-1,X}( Y ), h_{k-1,X}( X \setminus Y ) \right)
$$
and
$$
 h_{k, X}( Y ) := f( Y ) + g_{k}( X \setminus Y ). 
$$ 
The set function $g_{k,V}: 2^{V} \mapsto \mathbb{R}$
is submodular for any $k \ge 2$.
\end{theorem}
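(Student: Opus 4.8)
The plan is to prove the statement by induction on $k$, taking as the inductive object not $g_{k,V}$ alone but the whole family $h_{m,W}(Y) = f(Y) + g^\star_m(W\setminus Y)$, where $g^\star_m(W) := \min_{\emptyset \subset U \subset W} g_{m,W}(U)$ denotes the optimal $m$-partition value of a ground set $W$. This is the natural choice because every function in the recursion is assembled from the $h_{m,\cdot}$: indeed $g^\star_m(W) = \min_{U}\,[\,f(U) + g^\star_{m-1}(W\setminus U)\,] = \min_U h_{m-1,W}(U)$, and $g_{k,V}(Y) = \min(h_{k-1,V}(Y),\, h_{k-1,V}(V\setminus Y))$. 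The induction hypothesis I would carry is: \emph{for every ground set $W$ the function $Y \mapsto h_{m,W}(Y)$ is submodular}. Lemma~\ref{lem-partition-submodularity} is exactly the base case $m=2$ for any $W$ (after composing with the complementation $Y \mapsto W\setminus Y$, which preserves submodularity), and once the hypothesis is available at level $k-1$ the theorem follows from a single symmetrization step.

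First I would dispose of that symmetrization step, i.e.\ deduce submodularity of $g_{k,V}(Y)=\min(h_{k-1,V}(Y),h_{k-1,V}(V\setminus Y))$ from that of $h_{k-1,V}$. Both branches are submodular ($h_{k-1,V}$ by hypothesis, and $Y\mapsto h_{k-1,V}(V\setminus Y)$ by complementation), so by Lemma~\ref{lem-minimum} it suffices that the difference $d(Y):=h_{k-1,V}(Y)-h_{k-1,V}(V\setminus Y)$ be monotone. Writing $\psi(Y):=f(Y)-g^\star_{k-1}(Y)$ and using $h_{k-1,V}(V\setminus Y)=f(V\setminus Y)+g^\star_{k-1}(Y)$ gives the clean identity $d(Y)=\psi(Y)-\psi(V\setminus Y)$, so it is enough that $\psi$ be monotone. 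I would show $\psi$ is monotone decreasing by comparing marginals: for $v\notin Y$, taking an optimal $(k-1)$-partition of $Y\cup\{v\}$ and deleting $v$ from the block $Z_1$ containing it yields $g^\star_{k-1}(Y\cup\{v\})-g^\star_{k-1}(Y)\ge f(Z_1)-f(Z_1\setminus\{v\})\ge f(Y\cup\{v\})-f(Y)$, the last step by diminishing returns since $Z_1\setminus\{v\}\subseteq Y$. Thus the marginal of $g^\star_{k-1}$ dominates that of $f$, $\psi$ decreases, and $d$ is monotone. (The degenerate case in which $\{v\}$ is itself a block, together with the $\infty$ convention for sets too small to partition, must be checked separately, but this is routine given $f(\emptyset)=0$.)

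The heart of the argument, and the step I expect to be the main obstacle, is the inductive passage $h_{m-1,\cdot}$ submodular $\Rightarrow$ $h_{m,\cdot}$ submodular, which is precisely the generalization of Lemma~\ref{lem-partition-submodularity} from one free block plus a bipartition to one free block plus an optimal $(m-1)$-partition. I would realize it through partial minimization. Substituting $Z=Y\cup U$ in $h_{m,W}(Y)=f(Y)+\min_{U\subseteq W\setminus Y}[\,f(U)+g^\star_{m-1}(W\setminus(Y\cup U))\,]$ gives the representation $h_{m,W}(Y)=\min_{Y\subseteq Z\subseteq W}\Phi_m(Y,Z)$ over the sublattice $\{(Y,Z):Y\subseteq Z\}$, where $\Phi_m(Y,Z):=f(Y)+f(Z\setminus Y)-f(Z)+h_{m-1,W}(Z)$ (using $g^\star_{m-1}(W\setminus Z)=h_{m-1,W}(Z)-f(Z)$). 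Since partial minimization of a submodular function over a sublattice is again submodular, it then remains to verify that $\Phi_m$ is \emph{jointly} submodular on $\{Y\subseteq Z\}$. The term $h_{m-1,W}(Z)$ is submodular in $Z$ by the induction hypothesis and contributes harmlessly; the entire difficulty is concentrated in the bipartition term $f(Y)+f(Z\setminus Y)-f(Z)$, whose joint submodularity on the coupled lattice is exactly the exchange inequality established for two blocks in Lemma~\ref{lem-partition-submodularity}. This is where monotonicity of $f$ re-enters, and I expect establishing it to require either a direct generalization of the minimal-set exchange argument of Lemma~\ref{lem-partition-submodularity} or an inductive reduction to it.

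Assembling the pieces, the base case $m=2$ is Lemma~\ref{lem-partition-submodularity}; the passage to level $m$ yields submodularity of $h_{m,\cdot}$; and one further symmetrization via Lemma~\ref{lem-minimum}, with the monotone-difference check above, gives submodularity of $g_{m,\cdot}$ for every ground set. Specializing to $m=k$ and ground set $V$ gives the submodularity of $g_{k,V}$ asserted in the theorem.
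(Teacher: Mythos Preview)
Your symmetrization step --- reducing submodularity of $g_{k,V}$ to monotonicity of $d(Y)=h_{k-1,V}(Y)-h_{k-1,V}(V\setminus Y)$ via Lemma~\ref{lem-minimum}, and verifying that monotonicity by comparing the marginal of $f$ with that of the optimal $(k{-}1)$-partition value --- is exactly the paper's argument and is correct.

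The genuine gap is in your inductive passage $h_{m-1,\cdot}\Rightarrow h_{m,\cdot}$. You write $h_{m,W}(Y)=\min_{Y\subseteq Z\subseteq W}\Phi_m(Y,Z)$ and appeal to partial minimization, so that everything would reduce to joint submodularity of $\Phi_m(Y,Z)=f(Y)+f(Z\setminus Y)+g^\star_{m-2}(W\setminus Z)$ on the sublattice $\{Y\subseteq Z\}$. That joint submodularity is \emph{false}, already for $m=3$ where $\Phi_3(Y,Z)=f(Y)+f(Z\setminus Y)+f(W\setminus Z)$. Take $W=\{a,b,c,d\}$ and let $f$ be the rank function of the matroid with $a,c$ parallel and $b,d$ free. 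For $(Y_1,Z_1)=(\{a\},\{a,b\})$ and $(Y_2,Z_2)=(\{b\},\{a,b,c\})$ one gets $\Phi_3(Y_1,Z_1)+\Phi_3(Y_2,Z_2)=4+3=7$ but $\Phi_3(Y_1\cup Y_2,Z_1\cup Z_2)+\Phi_3(Y_1\cap Y_2,Z_1\cap Z_2)=4+4=8$; these $Z_i$ are even optimal for their respective $Y_i$, so restricting to optimizers does not help. The same pair also violates joint submodularity of the bare bipartition term $f(Y)+f(Z\setminus Y)-f(Z)$. Your claim that this term's joint submodularity ``is exactly the exchange inequality established in Lemma~\ref{lem-partition-submodularity}'' is a misreading: that lemma proves submodularity of the \emph{one}-variable function $X\mapsto f(V\setminus X)+\min_{A}f_X(A)$ directly, by exchanging pieces of two \emph{optimal} bipartitions in a case analysis that uses monotonicity of $f$; it never exhibits a jointly submodular two-variable lift, and indeed none exists. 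So the heart of your induction does not go through as written; what is actually needed for this step is an optimal-partition exchange argument in the style of Lemma~\ref{lem-partition-submodularity}, not a lattice-projection argument.
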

\begin{proof}
Any function $h_{2,X}(Y)$ for $\emptyset \subset Y \subset X \subset V$ is submodular due to Lemma \ref{lem-partition-submodularity}.
By induction, suppose that $h_{m, X}(Y)$ is submodular for $k = 2, \ldots, m$, and let us show $g_{m+1,X}(Y)$ is submodular.
By Lemma \ref{lem-minimum}, it is sufficient to show $$d_{m, X}( Y ) := h_{m, X}( Y ) - h_{m, X}( X \setminus Y )$$ is either monotone decreasing or monotone increasing.
For any singleton set $S \subseteq V$ and $|S| = 1$, 
\begin{eqnarray}
\nonumber
 d_{m,X}( Y ) &-& d_{m,X}( Y \cup S ) 
\\
\nonumber
&=& 
f( Y ) + \min_{U \in P_{ k, X \setminus Y } }\sum_{i=1}^{k}f(U_{i})
- 
f( X \setminus Y ) - \min_{U \in P_{ k, X } }\sum_{i=1}^{k}f(U_{i})
\\
\nonumber
-
f( Y \cup U ) &-& \min_{U \in P_{ k, X \setminus ( Y \cup U ) } }\sum_{i=1}^{k}f(U_{i})
+ 
f( X \setminus ( Y \cup U ) ) + \min_{U \in P_{ k, Y \cup U } }\sum_{i=1}^{k}f(U_{i})
\end{eqnarray}
Write the minimal $k$-partitions
$$
( W_{1}, \ldots, W_{k} ) := \argmin_{ ( U_{1}, \ldots, U_{k} ) \in P_{k, X \setminus Y}} \sum_{i=1}^{k}f( U_{i} )
$$
and
$$
( Z_{1}, \ldots, Z_{k} ) := \argmin_{ ( U_{1}, \ldots, U_{k} ) \in P_{k, Y + U }} \sum_{i=1}^{k}f( U_{i} )
.$$
We have the following inequalities 
\[
 \sum_{i=1}^{k}f(W_{i}) 
- \min_{ U \in P_{k, V \setminus (X + U)}} \sum_{i=1}^{k}f(U_{i})
\ge 
 \sum_{i=1}^{k}\delta( S \subseteq W_{i} )\left( f(W_{i}) - f( W_{i} \setminus S ) \right)
\]
and
\[
  \sum_{i=1}^{k}f(Z_{i}) 
- \min_{ U \in P_{k, X}} \sum_{i=1}^{k}f(U_{i})
\ge 
 \delta( S \subseteq Z_{i} )( f( Z_{i} ) - f( Z_{i} \setminus S ) )
,
\]
where $\delta( P ) = 1$ if the statement $P$ is true, and $\delta( P ) = 0$ otherwise.
By submodularity of $f$, we have the following inequalities for any $i$
$$
f( Y \cup S ) - f(Y) \le f( Z_{i} ) - f( Z_{i} \setminus S )
\text{ and } 
f(X \setminus Y) - f(X \setminus (Y \cup S)) \le 
f( W_{i} ) - f( W_{i} \setminus U ).
$$
Inserting these inequalities, we have
\[
  d_{m,X}( Y ) - d_{m,X}( Y + S ) 
\ge
0,
\]
and it implies $d_{m,X}( Y )$ is monotone decreasing.
\end{proof}


Theorem \ref{thm-k-partition} states that the $k^{\text{th}}$ order function $g_{k, V}$ is submodular at every step above, 
and thus we can apply Queyranne's algorithm to this function at every recursive step.
As minimization of a $k$-partition function includes minimization of the $(k-1)$-partition function, 
the number of required times to call the function $f$ is $O(n^{3(k-1)})$ for 
this recursive algorithm for the minimal $k$-partition of $n$ elements.


%
%

\begin{acknowledgements}
We thank Masafumi Oizumi for his encouragement to complete this manuscript. 
\end{acknowledgements}


\bibliographystyle{spmpsci}
\bibliography{TotalCorrReferences}

%
%

\end{document}